
\documentclass[12pt]{article}


\hoffset=-1in
\voffset=-1in
\parindent=6mm
\topskip=0mm
\topmargin=30mm
\oddsidemargin=31.5mm
\evensidemargin=31.5mm
\textwidth=155mm
\textheight=222mm
\headheight=0pt
\headsep=0pt
\footskip=2\baselineskip
\addtolength{\textheight}{-\footskip}

\usepackage{amsmath}   
\usepackage{amsthm}
\usepackage{mathptmx}

\newtheorem{theorem}{Theorem}
\newtheorem{lemma}{Lemma}
\newtheorem*{lemma*}{Lemma}
\newtheorem{corollary}{Corollary}


\usepackage[nooneline]{caption}
\usepackage{pifont,hyperref,amsthm,amsmath,amssymb,color,multirow,graphicx,subfigure,color}
\captionsetup[table]{justification=raggedright}
\captionsetup[figure]{justification=centering}

\numberwithin{equation}{section}

\RequirePackage{palatino}

\begin{document}


\begin{center}
{\Large\bf On asymptotic structure of continuous-time
    Markov Branching Processes allowing Immigration and without high-order moments}
\end{center}
\vspace{.1cm}
\begin{center}
{\sc Azam~A.~Imomov \, and \, Abror~Kh.~Meyliev}\\
\vspace{.4cm}
{\small \it Karshi State University, 17 Kuchabag street, \\
100180 Karshi city, Uzbekistan\\} 
{\small e-mail: \; {\small \sl imomov{\_}\,azam@mail.ru , \, abror{\_}meyliyev@mail.ru} }
\end{center}

\vspace{.3cm}
\begin{abstract}
    We observe the continuous-time Markov Branching Process without high-order moments and
    allowing Immigration. Limit properties of transition functions and their convergence to invariant measures
    are investigated. Main mathematical tool is regularly varying generating functions with remainder.

\emph{\textbf{Keywords:}} Markov Branching Process; Immigration; Transition functions; Slowly varying function; Invariant measures.

\textbf{2010 AMS MSC:} {Primary: 60J80;  Secondary: 60J85}
\end{abstract}


\medskip

\section {Introduction and main results}    \label{MySec:1}

    \subsection {Background and Basic assumptions}       \label{MySubsec:1.1}

    We deal with the model of population growth called continuous-time Markov Branching Process allowing Immigration (MBPI).
    This process can have a simple physical interpretation: a population size changes not only as a result of reproduction
    and disappearance of existing individuals, but also at the random stream of inbound ``extraneous'' individuals
    of the same type from outside.
    The population of individuals develops as follows. Each individual existing at time
    $t \in {\mathcal{T}}=[0, + \infty)$ independently of his history and of each other for a small time interval
    $(t, t+ \varepsilon)$ transforms into $j \in {\mathbb{N}}_0 \backslash \{ 1\} $ individuals with
    probability $a_j \varepsilon  + o(\varepsilon )$ and, with probability $1+ a_1 \varepsilon + o(\varepsilon)$
    stays to live or makes evenly one descendant (as $\varepsilon \downarrow 0$); where
    $\mathbb{N}_0=\{0\}\cup\mathbb{N}$ and $\mathbb{N}$ is the set of natural numbers. Here $\left\{{a_j} \right\}$
    are intensities of individuals' transformation that $a_j \ge 0$ for $j \in {\mathbb{N}}_0 \backslash \{1\}$ and
    $0 < a_0 < -a_1 = \sum\nolimits_{j \in {\mathbb{N}}_0 \backslash \{1\}}{a_j}<\infty$. Independently of these
    for each time interval $j \in {\mathbb{N}}$ new individuals inter the population with probability
    $b_j \varepsilon + o(\varepsilon)$ and, immigration does not occur with probability $1 + b_0 \varepsilon + o(\varepsilon)$.
    Immigration intensities $b_j \ge 0$ for $j \in {\mathbb{N}}$ and $0 <-b_0 = \sum\nolimits_{j \in {\mathbb{N}}}{b_j}<\infty $.
    Newly arrived individuals undergo transformation in accordance with the reproduction law generated
    by intensities $\left\{ {a_j} \right\}$; see {\cite[p.~217]{Sevast71}}.
    Thus, the process under consideration is completely determined by infinitesimal generating functions(GFs)
\begin{equation*}
    f(s) = \sum\limits_{j \in {\mathbb{N}}_0} {a_j s^j}
    \quad  \mbox{and} \quad  g(s)=\sum\limits_{j \in {\mathbb{N}}_0}{b_j s^j}
    \quad \parbox{2.4cm}{for {} $s\in{[0, 1)}$.}
\end{equation*}

    Let $X(t)$ be the population size at the time $t \in {\mathcal{T}}$ in MBPI. This is homogenous
    continuous-time Markov chain with state space $\mathcal{S}\subset\mathbb{N}_0$ and transition functions
\begin{equation*}
    p_{ij} (t):= \mathbb{P}_i \left\{ {X(t) = j} \right\}
    = \mathbb{P}\left\{{X(t+\tau)=j \, \bigl| \, {X(\tau)=i} \bigr.} \right\}
\end{equation*}
    for all $i,j \in {\mathcal{S}}$ and $\tau , t \in {\mathcal{T}}$. An appropriate probability GF
\begin{equation}                    \label{1.1}
    {\mathcal{P}}_i (t;s) = \sum\limits_{j \in {\mathcal{S}}} {p_{ij} (t)s^j}
    = \left(F(t;s)\right)^{i} \exp \left\{ {\int\limits_0^t {g\left({F(u;s)} \right)du}} \right\},
\end{equation}
    where $F(t;s)$ is GF of $Z(t)$ -- Markov Branching Process (MBP) initiated by single particle without immigration components.

    Providing that $m:=f'(1-)<\infty$, the value $1+m\varepsilon + o(\varepsilon)$ denotes the mean per capita offspring
    number of single individual during the any small time-interval $(t, t + \varepsilon)$ as $\varepsilon \downarrow 0$;
    and similarly, the value $\alpha \varepsilon + o(\varepsilon )$ denotes the average number of immigrants entering the
    population in this time interval, where $\alpha:=g'(1-)<\infty$.
    The case $\alpha =0$ specifies the process without immigration since then $g(s) \equiv 0$.

    Classification of states $\mathcal{S}$ depends on a value of the parameter $m$.
    According to the general classification of continuous-time Markov chains, the process
    is called subcritical, critical, and supercritical if $m<0$, $m=0$ and $m>0$, respectively.

    We consider the critical case only. In this case Sevastyanov~{\cite{Sevast57}} proved that
    if $2b:=f''(1-)$ is finite and the immigration law has a finite mean then the normalized process
    ${{X(t)}\mathord{\left/{\vphantom {{X(t)} {bt}}} \right. \kern-\nulldelimiterspace}{bt}}$
    has a limiting Gamma distribution function $\Gamma _{1,\,\lambda } (x)$, $x \ge 0$, where
    $\lambda = {{\alpha}\mathord{\left/{\vphantom {{\alpha}{b}}}\right.\kern-\nulldelimiterspace}{b}}$.
    Pakes~{\cite{PakesSankh}} has proved that $t^\lambda {\mathcal{P}}_i (t;s)$ converges as $t\to{\infty}$
    to the limit $\pi{(s)}$ which has the power series expansion and generates an invariant measure
    $\left\{{\pi}_j, j\in{\mathcal{S}} \right\}$ for MBPI iff $\sum\nolimits_{j \in {\mathbb{N}}}{a_j j^2 \ln{j}}<\infty$
    and $\sum\nolimits_{j \in {\mathbb{N}}}{b_j j\ln{j}}<\infty$. In accordance with the appropriate result
    of the paper {\cite{LiPakes2012}} the invariant measure of MBPI can also be constructed by the strong ratio
    limit property of transition functions but slightly different in appearance. Namely, the set of positive numbers
    $\bigl\{\upsilon_j :=\lim _{t \to \infty } {{p_{0j}(t)} / {p_{00} (t)}}\bigr\}$ is an invariant measure.
    It can be seen a close relation between the sequences $\left\{{\pi}_j, j\in{\mathcal{S}} \right\}$ and
    $\left\{\upsilon_j, j\in{\mathcal{S}}\right\}$, and their GFs $\pi{(s)}$ and
    $\mathcal{U}(s)= \sum\nolimits_{j \in{\mathcal{S}}}{\upsilon_j s^j}$. In fact,
    they are really only different versions of the same limit law. So, it is easy
    to see $\mathcal{U}(s)=\pi{(s)}/\pi{(0)}$, and this is consistent with uniqueness,
    up to a multiplicative constant, of the invariant measure of MBPI.

    In the circle of tasks of studying the asymptotic properties of process states, of exceptional
    interest is the estimate of the rate of convergence to invariant measures. In the paper
    {\cite{ImomovSFU14}}, under the condition $f'''(1-)<\infty$, the rate of convergence of
    $t^\lambda {\mathcal{P}}_i (t;s)$ to $\pi{(s)}$ was studied. It was found there that the convergence
    rate is $\mathcal{O}\bigl({{{\ln t}/ t}} \bigr)$ as $t\to{\infty}$ uniformly in $s\in [0, 1)$.

    In this report we attempt to improve the above results from {\cite{ImomovSFU14}} and {\cite{LiPakes2012}} on
    convergence rate and find out an appearance of GF of the invariant measure bypassing the finiteness conditions
    of high-order moments of infinitesimal GF $f(s)$ and $g(s)$. For this, we will substantially use elements of the
    theory of regularly varying functions in the sense of Karamata; see for instance {\cite{Bingham}} and {\cite{SenetaRV}}.

    Throughout the paper, we adhere to the following assumptions on $f(s)$ and $g(s)$:
\begin{equation*}
    f(s)=(1-s)^{1+\nu}\mathcal{L}\left({{{1} \over {1-s}}}\right)      \eqno[\textsf {$f_\nu$}]
\end{equation*}
    and
\begin{equation*}
    g(s)=-(1-s)^{\delta}{\ell}\left({{{1} \over {1-s}}}\right)      \eqno[\textsf {$g_\delta$}]
\end{equation*}
    for all $s\in [0, 1)$, where $0 < \nu , \delta < 1$ and functions $\mathcal{L}(\cdot)$, $\ell(\cdot)$ are
    \textit{slowly varying at infinity} (${\textbf{SV}}_\infty$). By the criticality of our process, the assumption
    $[f_\nu]$ implies that $2b:=f''(1-)=\infty$. If $b<\infty$ then $[f_\nu]$ holds with $\nu =1$ and
    $\mathcal{L}(t) \to b$ as $t \to \infty$. Similarly, GF $g(s)$ of the form $[g_\delta]$ generates
    the law of immigrants arrival, having the moment of $\delta$-order. If $g'(1-)<\infty$ then
    $[g_\delta]$ holds with $\delta =1$ and $\mathcal{\ell}(t) \to{g'(1-)}$ as $t \to \infty$.

    Throughout the paper $[f_{\nu}]$ and $[g_{\delta}]$ are our Basic assumptions.

    By perforce we allow to forcedly put forward an additional requirement for $\mathcal{L}(x)$ and
    ${\ell}(x)$. So we can write
\begin{equation*}
    {{\mathcal{L}\left( {\lambda x} \right)} \over {\mathcal{L}(x)}}
    = 1 + {\mathcal{O}}\bigl(g(x)\bigr)
    \quad \parbox{2.2cm}{{as} {} $x  \to \infty$.}        \eqno[\textsf {$\mathcal{L}_{\nu}$}]
\end{equation*}
    for each $\lambda > 0$, where $g(x)$ is known positive
    decreasing function so that $g(x) \to 0$ as $x \to \infty $. In this case $\mathcal{L}(x)$ is called
    ${\textbf{SV}}_\infty$ with remainder ${\mathcal{O}}\bigl(g(x)\bigr)$; see {\cite[p.~185, condition SR1]{Bingham}}.
    Wherever we exploit the condition $\left[\mathcal{L}_{\nu} \right]$ we will suppose that
\begin{equation*}
    g(x) = {\mathcal{O}}\left( {{{\mathcal{L}\left(x\right)} \over {x^\nu }}} \right)
    \quad \parbox{2.2cm}{{as} {} $x  \to \infty$.}
\end{equation*}
    Similarly, we also allow a condition
\begin{equation*}
    {{{\ell}\left( {\lambda x} \right)} \over {{\ell}(x)}}
    = 1 + {\mathcal{O}}\bigl(h(x)\bigr)
    \quad \parbox{2.2cm}{{as} {} $x  \to \infty$}     \eqno[\textsf {${\ell}_{\delta}$}]
\end{equation*}
    for each $\lambda > 0$, where
\begin{equation*}
    h(x) = {\mathcal{O}}\left( {{{{\ell}\left(x\right)} \over {x^\delta}}} \right)
    \quad \parbox{2.2cm}{{as} {} $x  \to \infty$.}
\end{equation*}

\subsection {Main Results}                  \label{MySubsec:1.2}

    Since $F (t;s)\to 1$ as $t\to \infty$ uniformly in $s\in[0,1)$ (see Lemma~\ref{MyLem:1} below),
    suffice it to consider ${\mathcal{P}}(t;s) := {\mathcal{P}}_{0}(t;s)$.
    Then under assumptions $[f_\nu]$ and $[g_\delta]$, due to the backward Kolmogorov equation
    ${\partial{F}}/{\partial{t}}=f\left(F\right)$, from \eqref{1.1} formally follows
\begin{equation*}
    {\mathcal{P}}(t;s) = \exp \left\{{\int\limits_s^{F(t;s)}
    {{{g(x)} \over {f(x)}}\,dx}} \right\} \longrightarrow {w}(s)
    \qquad \parbox{2.2cm}{{as} {} $t  \to \infty$,}
\end{equation*}
    where in view of $[f_\nu]$ and $[g_\delta]$,
\begin{equation}                    \label{1.2}
    {w}(s)= \exp \left\{-{\int\limits_s^1 {{\left(1-x\right)^{\gamma -1}
    \textsf{\emph{L}}{\left({1} \over {1-x}\right)}dx}}} \right\}
\end{equation}
    herein $\gamma = \delta - \nu$ and
\begin{equation*}
    \textsf{\emph{L}}(x) :={{\ell(x)} \over {\mathcal{L}(x)}} \,\raise0.8pt\hbox{.}
\end{equation*}

    All appearances, the three cases can be divided concerning the classification of $\mathcal{S}$,
    depending on a sign of $\gamma$. Evidently, integral in \eqref{1.2} converges if
    $\gamma >0$, and diverges if $\gamma <0$. Thus and so, as it was shown in {{\cite{LiPakes2012}}, that
    $\mathcal{S}$ is positive-recurrent if $\gamma > 0$, and it is transient if $\gamma < 0$. The
    special case $\gamma = 0$ implies that $g(s)=f'(s)$ and that $\textsf{\emph{L}}(t) \to 1+\nu$
    as $t \to \infty$. And we get another population process called \textit{Markov Q-process}
    instead of MBPI. We refer the reader to {\cite{Imomov17}} and {\cite{Imomov12}} for the
    details on the Markov Q-process; see also {\cite[pp.~56--58]{ANey}} and {\cite{Pakes99}}
    for the discrete-time case.

    We can see that in the case $\gamma >0$ the function ${\pi}(s)$ generates an invariant measure for MBPI.
    In fact, owing to the functional equation $F(t+\tau;s) = {F\bigl(t; {F(\tau;s)}\bigr)}$ it follows
\begin{eqnarray*}
    {\mathcal{P}}(t+\tau;s)
    & = & \exp \left\{ {\int\limits_0^{t+\tau} {g\left( {F(u;s)} \right)du}} \right\}
    = {\mathcal{P}}(\tau;s)\cdot\exp \left\{ {\int\limits_{\tau}^{t+\tau} {g\left( {F(u;s)} \right)du}} \right\} \\
\nonumber\\
    & = & {\mathcal{P}}(\tau;s) \cdot \exp \left\{ {\int\limits_{0}^{t} {g\left( {F\bigl(u; {F(\tau;s)}\bigr)} \right)du}} \right\}
    =  {\mathcal{P}}(\tau;s)\cdot {\mathcal{P}}\bigl(t; {F(\tau;s)}\bigr)
\end{eqnarray*}
    and taking limit as $t \to \infty$ we have the following Schr\"{o}der type functional equation:
\begin{equation}                    \label{1.3}
    {w}\bigl(F(\tau;s)\bigr) = {\frac{1}{{\mathcal{P}}(\tau;s)}} \, {w}(s)
    \qquad \parbox{2.8cm}{{for any} {} $\tau \in {\mathcal{T}}$.}
\end{equation}
    Writing the power series expansion ${w}(s)= \sum\nolimits_{j \in{\mathcal{S}}}{{w}_j s^j}$,
    the equation \eqref{1.3} implies that ${w}_j = \sum\nolimits_{i \in {\mathcal{S}}} {{w}_i p_{ij}(\tau)}$.

    First of all we observe asymptotic properties of ${p_{00}(t)}$. Our first theorem shows that
    $\ln{p_{00}(t)}$ is asymptotical ${\textbf{SV}}_\infty$. Henceforth further we use a designation
\begin{equation*}
    \tau{(t)} := {(\nu{t})^{1/\nu} \over {\mathcal{N}(t)}} \, \raise 0.8pt\hbox{,}
\end{equation*}
    where the function ${\mathcal{N}}(x)$ is ${\textbf{SV}}_\infty$ defined in Lemma~\ref{MyLem:1} below.

\begin{theorem}                    \label{MyTh:1}
    Let $\gamma > 0$. If assumptions $[\mathcal{L}_{\nu}]$ and $[{\ell}_{\delta}]$ hold, then
\begin{equation}                    \label{1.4}
    -\ln{p_{00}(t)} = {{\,1\,}\over{\gamma}}{\textsf{L}{\bigl(\tau{(t)}\bigr)}} \bigl(1+\kappa(t)\bigr),
\end{equation}
    where
\begin{itemize}
\item  [{(i)}]   if $\delta > 2\nu$, then
    $\kappa(t)=\mathcal{O}\left( 1 \bigl/ t \bigr. \right)$ as $t \to \infty$;

\item [{(ii)}]   if $\delta < 2\nu$, then
    $\kappa(t)=\mathcal{O}\left( \mathcal{N}^{\gamma}(t) \bigl/ t^{\gamma/\nu} \bigr. \right)$ as $t \to \infty$.
\end{itemize}
\end{theorem}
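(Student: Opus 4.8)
The plan is to start from the explicit formula for $p_{00}(t)$. Setting $s=0$ in the expression for $\mathcal{P}(t;s)$ gives
\[
  -\ln p_{00}(t) = -\int_{0}^{F(t;0)} \frac{g(x)}{f(x)}\,dx
  = \int_{0}^{F(t;0)} (1-x)^{\gamma-1}\,\textsf{L}\!\left(\frac{1}{1-x}\right) dx,
\]
where I have used the Basic assumptions $[f_\nu]$ and $[g_\delta]$ to write $g(x)/f(x) = -(1-x)^{\gamma-1}\textsf{L}(1/(1-x))$. The whole problem is therefore to estimate this integral with a precise remainder. Since $\gamma>0$ the integrand is integrable up to $x=1$, so it is natural to write the integral as the (finite) integral over $[0,1)$ minus the tail integral over $[F(t;0),1)$; the first piece is a constant that should combine into the leading term, while the tail is what carries the $t$-dependence. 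First I would change variables to $y = 1/(1-x)$, turning the tail into $\int_{N(t)}^{\infty} y^{-\gamma-1}\textsf{L}(y)\,dy$ with the upper endpoint $N(t) := 1/(1-F(t;0))$.

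Next I would invoke Lemma~\ref{MyLem:1} to control $N(t)$. The designation $\tau(t) = (\nu t)^{1/\nu}/\mathcal{N}(t)$ and the statement that $\mathcal{N}(\cdot)$ is $\textbf{SV}_\infty$ strongly suggest that Lemma~\ref{MyLem:1} supplies an asymptotic of the form $1-F(t;0) \sim \mathcal{N}(t)/(\nu t)^{1/\nu}$, i.e.\ $N(t) \sim \tau(t)$, together with a quantified error term. I would substitute this into the tail integral. Using Karamata's theorem for the tail of $\int^\infty y^{-\gamma-1}\textsf{L}(y)\,dy$ (valid because $\textsf{L}$ is $\textbf{SV}_\infty$, being the ratio $\ell/\mathcal{L}$ of two slowly varying functions), the leading behaviour of the tail is
\[
  \int_{N(t)}^{\infty} y^{-\gamma-1}\textsf{L}(y)\,dy
  = \frac{1}{\gamma}\, N(t)^{-\gamma}\,\textsf{L}\bigl(N(t)\bigr)\bigl(1+o(1)\bigr).
\]
The delicate point is that the Theorem asks for $-\ln p_{00}(t)$ expressed in terms of $\textsf{L}(\tau(t))$ rather than $\textsf{L}(N(t))$, and with a sharp remainder $\kappa(t)$, not merely $o(1)$. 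This is exactly where the remainder conditions $[\mathcal{L}_\nu]$ and $[\ell_\delta]$ enter: they give the quantified estimate $\textsf{L}(\lambda x)/\textsf{L}(x) = 1 + \mathcal{O}(\max(g(x),h(x)))$, which lets me replace $\textsf{L}(N(t))$ by $\textsf{L}(\tau(t))$ at the cost of a controlled error, and also controls the error in the Karamata tail estimate itself.

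The main obstacle, and the part that splits into the two cases (i) and (ii), is the bookkeeping of competing error terms. There are three sources of error: the error in approximating $N(t)$ by $\tau(t)$ (coming from the remainder in Lemma~\ref{MyLem:1}, which I expect to be $\mathcal{O}(1/t)$ type), the error in the slowly-varying remainder $[\mathcal{L}_\nu],[\ell_\delta]$ when transporting $\textsf{L}$ from $N(t)$ to $\tau(t)$, and the error in the Karamata tail asymptotic. I would evaluate each in terms of $t$ using $\tau(t)^{-\gamma} \asymp \mathcal{N}^\gamma(t)/t^{\gamma/\nu}$ and the imposed bounds $g(x)=\mathcal{O}(\mathcal{L}(x)/x^\nu)$, $h(x)=\mathcal{O}(\ell(x)/x^\delta)$, which when combined produce remainders scaling like $x^{-\nu}$ and $x^{-\delta}$. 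Evaluating at $x \asymp \tau(t) \asymp t^{1/\nu}$ converts these into powers of $t$: the term from the slowly-varying remainders behaves like $t^{-\min(1,\delta/\nu)}$ while the term from the $N(t)\to\tau(t)$ substitution behaves like $t^{-\gamma/\nu}\mathcal{N}^\gamma(t)$. Comparing $1$ (equivalently $\delta/\nu$ versus the competing exponent) against $\gamma/\nu$ is precisely the dichotomy $\delta \gtrless 2\nu$ stated in the Theorem, since $\gamma = \delta-\nu$. I expect the crux of the write-up to be verifying carefully that the dominant remainder is $\mathcal{O}(1/t)$ when $\delta>2\nu$ and $\mathcal{O}(\mathcal{N}^\gamma(t)/t^{\gamma/\nu})$ when $\delta<2\nu$, and confirming that the constant part of the full integral over $[0,1)$ is correctly absorbed into the leading $\frac{1}{\gamma}\textsf{L}(\tau(t))$ term rather than surviving as a separate constant.
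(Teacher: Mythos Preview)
Your opening is exactly the paper's: write $-\ln p_{00}(t)=\int_{1}^{1/R(t)}y^{-(1+\gamma)}\textsf{L}(y)\,dy$ after the substitution $y=1/(1-x)$. But from there the paper does \emph{not} split into a constant over $[1,\infty)$ minus a tail; it applies Lemma~\ref{MyLem:2} directly to the finite integral $\int_{1}^{\tau(t)}$ and obtains in one stroke
\[
  -\ln p_{00}(t)=\frac{1}{\gamma}\,\textsf{L}\bigl(\tau(t)\bigr)\Bigl(1-\tau(t)^{-\gamma}\Bigr)\bigl(1+r(\tau(t))\bigr),
\]
so the leading factor $\frac{1}{\gamma}\textsf{L}(\tau(t))$ appears automatically with no ``absorbing the constant'' step. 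Note also that one of your three error sources is illusory: Lemma~\ref{MyLem:1} gives $M(t;0)=0$, so $1/R(t;0)=\tau(t)$ \emph{exactly}, and there is no transport from $N(t)$ to $\tau(t)$. The remaining two errors in the paper's display are just $\tau(t)^{-\gamma}=\mathcal{N}^{\gamma}(t)/t^{\gamma/\nu}$ and $r(\tau(t))=\mathcal{O}(\mathcal{L}(\tau(t))/\tau(t)^{\nu})=\mathcal{O}(1/t)$ (using \eqref{2.2}); the dichotomy $\delta\gtrless 2\nu$ is simply $\gamma\gtrless\nu$, i.e.\ which of the two dominates.

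Your split-plus-Karamata route runs into the very difficulty you flag at the end. After writing $-\ln p_{00}(t)=C-\int_{\tau(t)}^{\infty}y^{-(1+\gamma)}\textsf{L}(y)\,dy$ with $C=\int_{1}^{\infty}y^{-(1+\gamma)}\textsf{L}(y)\,dy$ a fixed number, Lemma~\ref{MyLem:3} handles the tail, but to reach the stated form you still need $C=\frac{1}{\gamma}\textsf{L}(\tau(t))\bigl(1+\mathcal{O}(\kappa(t))\bigr)$. That is not a consequence of the tail estimate; it requires precisely the finite-interval statement of Lemma~\ref{MyLem:2} (or, equivalently, the convergence $\textsf{L}(x)\to\gamma C$ at a quantified rate, which under $[\mathcal{L}_\nu]$, $[\ell_\delta]$ you would have to extract separately). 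So your detour either re-proves Lemma~\ref{MyLem:2} in disguise or leaves the constant unaccounted for. The cleaner path is the paper's: skip the split and invoke Lemma~\ref{MyLem:2} once.
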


    Another property comes out when $\gamma < 0$.

\begin{theorem}                    \label{MyTh:2}
    Let $\gamma < 0$. If assumptions $[\mathcal{L}_{\nu}]$ and $[{\ell}_{\delta}]$ hold, then
\begin{equation}                    \label{1.5}
    -{{\bigl(\tau{(t)}\bigr)^{-|\gamma|}} \ln{p_{00}(t)}} = {{\,1\,}\over{|\gamma|}}
    {\textsf{L}{\bigl(\tau{(t)}\bigr)}} \bigl(1+\kappa(t)\bigr),
\end{equation}
    where
\begin{itemize}
\item  [{(i)}]   if ${\nu}({\nu}-{\delta}) > \delta$, then
    $\kappa(t)=\mathcal{O}\bigl( {\ell_{\tau}(t)} \bigl/ {t^{\delta/\nu}} \bigr. \bigr)$
    as $t \to \infty$, where ${\ell_{\tau}(t)}$ is ${\textbf{SV}}_\infty$;

\item [{(ii)}]   if ${\nu}({\nu}-{\delta}) < \delta$, then
    $\kappa(t)=\mathcal{O}\left( \mathcal{N}^{|\gamma|}(t) \bigl/ t^{|\gamma|/\nu} \bigr. \right)$ as $t \to \infty$.
\end{itemize}
\end{theorem}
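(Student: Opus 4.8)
The plan is to reduce the statement to a single Karamata-type integral and then control two competing remainder terms. Setting $i=0$ in \eqref{1.1} and using the representation ${\mathcal{P}}(t;s)=\exp\{\int_s^{F(t;s)}(g(x)/f(x))\,dx\}$ displayed just before \eqref{1.2}, together with the Basic assumptions $[f_\nu]$ and $[g_\delta]$ (which give $-g(x)/f(x)=(1-x)^{\gamma-1}\,\textsf{L}(1/(1-x))$), I would first write
\[
-\ln p_{00}(t)=\int_0^{F(t;0)}(1-x)^{\gamma-1}\,\textsf{L}\!\left(\frac{1}{1-x}\right)dx
=\int_1^{Y(t)} y^{|\gamma|-1}\,\textsf{L}(y)\,dy ,
\]
where the substitution is $y=1/(1-x)$, $Y(t)=1/(1-F(t;0))$, and $|\gamma|=\nu-\delta>0$ because $\gamma<0$. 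Since $\textsf{L}=\ell/\mathcal{L}$ is slowly varying and $|\gamma|>0$, Karamata's theorem already gives the leading order $|\gamma|^{-1}Y(t)^{|\gamma|}\textsf{L}(Y(t))$; dividing by $\tau(t)^{|\gamma|}$ and inserting $Y(t)\sim\tau(t)$ from Lemma~\ref{MyLem:1} reproduces the main factor $|\gamma|^{-1}\textsf{L}(\tau(t))$ in \eqref{1.5}. Thus the whole content of the theorem lies in the remainder $\kappa(t)$.

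I would then isolate the two sources of that remainder. The first is the replacement of the true upper limit $Y(t)$ by $\tau(t)$: Lemma~\ref{MyLem:1} gives $Y(t)\sim\tau(t)$, and tracking the remainder in $[\mathcal{L}_\nu]$ through the backward equation $\partial F/\partial t=f(F)$ shows the relative error here is of order $1/t$, which then propagates into $(Y/\tau)^{|\gamma|}$ and into $\textsf{L}(Y)/\textsf{L}(\tau)$. Because $\delta/\nu<1$ and $|\gamma|/\nu<1$ in the transient regime, this $\mathcal{O}(1/t)$ contribution is \emph{subdominant}. The second and decisive source is the second-order (with-remainder) Karamata estimate of $\int_1^{Y}y^{|\gamma|-1}\textsf{L}(y)\,dy$ against $|\gamma|^{-1}Y^{|\gamma|}\textsf{L}(Y)$. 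Integrating by parts and using that $y\textsf{L}'(y)/\textsf{L}(y)=\mathcal{O}(\ell(y)/y^{\delta})+\mathcal{O}(\mathcal{L}(y)/y^{\nu})$ — where $[\ell_\delta]$ supplies the first term and $[\mathcal{L}_\nu]$ the second, the former dominating because $\delta<\nu$ — produces a correction integral together with a boundary contribution from the endpoint $y=1$. Whether that correction integral stays bounded or grows with $Y$, and how it compares with the $\mathcal{O}(Y^{-|\gamma|})$ endpoint term, is exactly what separates the two regimes, yielding a dominant remainder $\mathcal{O}(\tau^{-\delta})\sim\mathcal{O}(\ell_\tau(t)/t^{\delta/\nu})$ in case (i) and $\mathcal{O}(\tau^{-|\gamma|})\sim\mathcal{O}(\mathcal{N}^{|\gamma|}(t)/t^{|\gamma|/\nu})$ in case (ii), as in \eqref{1.5}.

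The step I expect to be the main obstacle is precisely this last piece of error bookkeeping: one must carry the Karamata asymptotics to second order with fully explicit remainders, keep control of the slowly varying prefactors $\ell(\tau)$ and $\mathcal{N}$ (so as to exhibit the slowly varying factor $\ell_\tau$ of case (i)), and verify that the balance between the correction integral and the endpoint term flips exactly across the threshold $\nu(\nu-\delta)=\delta$. A secondary but essential point is to confirm that the $Y(t)\approx\tau(t)$ error never dominates in the present transient regime — in contrast with the recurrent setting of Theorem~\ref{MyTh:1}, where the analogous $\mathcal{O}(1/t)$ term does surface in case (i) — so that $\kappa(t)$ is governed solely by the integral-remainder and endpoint contributions. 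Throughout, the uniform convergence $F(t;s)\to1$ and the explicit form $\tau(t)=(\nu t)^{1/\nu}/\mathcal{N}(t)$ from Lemma~\ref{MyLem:1} are what make both the substitution $Y\sim\tau$ and the final passage to $t$-asymptotics legitimate.
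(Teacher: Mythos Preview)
Your plan is essentially the paper's own argument, but you have added a layer of difficulty that is not there. By Lemma~\ref{MyLem:1} one has $M(t;0)=0$, so $R(t;0)=\mathcal{N}(t)/(\nu t)^{1/\nu}$ \emph{exactly} and hence $Y(t)=1/R(t;0)=\tau(t)$ on the nose; there is no ``replacement of $Y(t)$ by $\tau(t)$'' error to track at all, and your discussion of an $\mathcal{O}(1/t)$ contribution from this source can be deleted. The paper then applies Lemma~\ref{MyLem:2}\,(ii) directly to $\int_1^{\tau(t)} y^{|\gamma|-1}\textsf{\emph{L}}(y)\,dy$, obtaining
\[
-\ln p_{00}(t)=\frac{1}{|\gamma|}\,\textsf{\emph{L}}\bigl(\tau(t)\bigr)\,\bigl(\tau(t)\bigr)^{|\gamma|}
\Bigl(1-\tau(t)^{-|\gamma|}\Bigr)\bigl(1+r(\tau(t))\bigr),
\]
so the remainder $\kappa(t)$ is governed by exactly the two pieces you eventually isolate: the endpoint factor $\tau(t)^{-|\gamma|}$ and the $\textbf{SV}_\infty$-remainder $r(\tau(t))=\mathcal{O}\bigl(\ell(\tau)/\tau^{\delta}\bigr)$. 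Comparing these two gives the dichotomy (i)/(ii) across $|\gamma|\gtrless\delta/\nu$, i.e.\ $\nu(\nu-\delta)\gtrless\delta$. Your integration-by-parts route via $y\,\textsf{\emph{L}}'(y)/\textsf{\emph{L}}(y)$ would also work, but it is strictly more laborious than invoking the ready-made Lemma~\ref{MyLem:2}; the paper never differentiates $\textsf{\emph{L}}$.
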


    Now more generally, taking into account Basic assumptions $[f_{\nu}]$ and $[g_{\delta}]$, it would be reasonable
    to seek for an explicit form of the GF of invariant measures depending on the sign of the parameter $\gamma$.

\begin{theorem}                     \label{MyTh:3}
    Let $\gamma > 0$ and assumptions $[\mathcal{L}_{\nu}]$ and $[{\ell}_{\delta}]$ hold. Then
\begin{equation}                    \label{1.6}
    {w}(s) = \exp \left\{ -{{\,1\,}\over{\gamma}} { {g(s)} \over {\Lambda(1-s)} }
    \left( 1+ \mathcal{O}\bigl(\Lambda(1-s)\bigr) \right) \right\}
    \quad \parbox{2.2cm}{{as} {} $s  \uparrow 1$,}
\end{equation}
    where $\Lambda(y)=y^{\nu}\mathcal{L}\left( 1/y\right)$. GF ${w}(s)$ generates
    an invariant distribution with respect to transition probabilities $\{p_{ij}(t)\}$.
\end{theorem}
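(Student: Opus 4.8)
The plan is to reduce \eqref{1.6} to a single Karamata-type integral asymptotics and then to read off the invariant-distribution property directly from the Schr\"{o}der equation \eqref{1.3}. Writing $\varepsilon=1-s$ and substituting $y=1-x$ in \eqref{1.2}, I would first record that
\[
    {w}(s)=\exp\{-I(s)\},\qquad I(s)=\int_0^{\varepsilon}y^{\gamma-1}\textsf{L}\!\left(1/y\right)dy ,
\]
whose integrand is precisely $-g(x)/f(x)$ under $[f_\nu]$ and $[g_\delta]$, with $f(s)=(1-s)\Lambda(1-s)$. Since $\textsf{L}=\ell/\mathcal{L}$ is a quotient of two ${\textbf{SV}}_\infty$ functions it is itself ${\textbf{SV}}_\infty$, and the hypothesis $\gamma>0$ guarantees convergence of $I(s)$ at $y=0$.

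For the leading order I would pass to $u=1/y$, giving $I(s)=\int_{1/\varepsilon}^{\infty}u^{-\gamma-1}\textsf{L}(u)\,du$, and apply Karamata's integral theorem (the exponent $-\gamma-1<-1$): $I(s)\sim\gamma^{-1}\varepsilon^{\gamma}\textsf{L}(1/\varepsilon)$ as $\varepsilon\downarrow0$. A one-line computation using $[f_\nu]$ and $[g_\delta]$ shows $\gamma^{-1}\varepsilon^{\gamma}\textsf{L}(1/\varepsilon)=-\gamma^{-1}g(s)/\Lambda(1-s)$, which already exhibits the leading factor of \eqref{1.6}.

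The main obstacle is sharpening this to the relative error $\mathcal{O}(\Lambda(1-s))$, and this is where $[\mathcal{L}_\nu]$ and $[{\ell}_\delta]$ enter. I would write $I(s)=\varepsilon^{\gamma}\textsf{L}(1/\varepsilon)\int_0^1 t^{\gamma-1}\bigl(\textsf{L}(1/(\varepsilon t))/\textsf{L}(1/\varepsilon)\bigr)\,dt$ and estimate the deviation of the inner integral from $\gamma^{-1}$. With $x=1/\varepsilon$ and $\lambda=1/t$, the factorization $\textsf{L}=\ell/\mathcal{L}$ together with $[\mathcal{L}_\nu]$ and $[{\ell}_\delta]$ gives $\textsf{L}(\lambda x)/\textsf{L}(x)-1=\mathcal{O}(\mathcal{L}(x)/x^{\nu})+\mathcal{O}(\ell(x)/x^{\delta})$. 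The delicate point is that the implied constants must be controlled uniformly in $\lambda=1/t$ so that the $t$-integral (where $t\downarrow0$ forces $\lambda\to\infty$) converges; for this I would invoke the uniform convergence theorem for ${\textbf{SV}}_\infty$ functions with remainder (Potter-type bounds), obtaining $|\textsf{L}(\lambda x)/\textsf{L}(x)-1|\le C(\lambda^{\eta}+\lambda^{-\eta})(\mathcal{L}(x)/x^{\nu}+\ell(x)/x^{\delta})$ for any fixed $0<\eta<\gamma$ and all large $x$, so that $\int_0^1 t^{\gamma-1-\eta}\,dt<\infty$ handles the endpoint $t=0$. Evaluating the remainder at $x=1/\varepsilon$ gives $\mathcal{L}(1/\varepsilon)/\varepsilon^{-\nu}=\varepsilon^{\nu}\mathcal{L}(1/\varepsilon)=\Lambda(1-s)$ and $\ell(1/\varepsilon)/\varepsilon^{-\delta}=\varepsilon^{\delta}\ell(1/\varepsilon)=\varepsilon^{\gamma}\textsf{L}(1/\varepsilon)\cdot\Lambda(1-s)=o\bigl(\Lambda(1-s)\bigr)$, since $\varepsilon^{\gamma}\textsf{L}(1/\varepsilon)\to0$ for $\gamma>0$. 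Hence the total error is $\mathcal{O}(\Lambda(1-s))$, which is exactly \eqref{1.6}.

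Finally, the invariant-distribution claim needs little extra work, since the Schr\"{o}der relation \eqref{1.3} was already established: expanding ${w}(s)=\sum_{j}{w}_j s^j$ it yields ${w}_j=\sum_{i}{w}_i p_{ij}(\tau)$, so $\{{w}_j\}$ is invariant for $\{p_{ij}(t)\}$. Nonnegativity of the ${w}_j$ follows because ${w}(s)=\lim_{t\to\infty}{\mathcal{P}}(t;s)$ is a limit of probability GFs, and ${w}(1-)=1$ (the integral in \eqref{1.2} vanishes at $s=1$ when $\gamma>0$) makes it a genuine distribution; the only routine point to verify is the passage from convergence of GFs to convergence of coefficients, which is standard via Abel's theorem and the continuity theorem for power series.
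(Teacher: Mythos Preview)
Your proof is correct and follows essentially the same route as the paper: after the substitution $u=1/(1-x)$ both reduce to the tail integral $\int_{1/(1-s)}^{\infty}u^{-\gamma-1}\textsf{\emph{L}}(u)\,du$, which the paper handles by invoking its auxiliary Lemma~\ref{MyLem:3} while you reproduce that lemma's content inline with explicit Potter-type uniformity bounds. The invariance and distribution parts (via the Schr\"{o}der relation \eqref{1.3} and $w(1-)=1$) match the paper's argument almost verbatim.
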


    In the case $\gamma < 0$, the asymptotic formula \eqref{1.5} suggests that we should look for a limit
    as $t \to \infty$ of the function $e^{T(t)}{\mathcal{P}}(t;s)$ with $T(t)={\bigl(\tau(t)\bigr)^{|\gamma|}}$.
    First wee need to discuss a ${\textbf{SV}}_\infty$ property of $\textsf{\emph{L}}(t)$. In accordance with
    Slowly varying theory, ${\ell}(\cdot)$ and $\mathcal{L}(\cdot)$ are positive and monotone. Moreover,
    by virtue of {\cite[p.~186, Corollary~3.12.3]{Bingham}}, we see
\begin{itemize}
\item[$\diamondsuit$]    $[\mathcal{L}_{\nu}]$ \quad $\Longleftrightarrow$ \quad ${\mathcal{L}(x)}
                        =C_1\left[ 1+ {\mathcal{O}}\bigl(g(x)\bigr) \right]
        \quad \parbox{2.2cm}{{as} {} $t \to \infty$,}$

\item[$\diamondsuit$]    $[{\ell}_{\delta}]$ \quad \, $\Longleftrightarrow$ \quad ${{\ell}(x)}
                        =C_2\left[ 1+ {\mathcal{O}}\bigl(h(x)\bigr) \right]
        \quad \parbox{2.2cm}{{as} {} $t \to \infty$,}$
\end{itemize}
    where $C_1, C_2$ -- positive constants and functions $g(x), h(x)$ are in $[\mathcal{L}_{\nu}]$ and $[{\ell}_{\delta}]$.
    We then can reveal the fact that $\textsf{\emph{L}}(t) \to constant$ as $t \to \infty$, more precisely
\begin{equation*}
    \textsf{\emph{L}}(t) = {{\ell(t)} \over {\mathcal{L}(t)}} =
    {C}\left[ 1+ \mathcal{O}\left({{{\ell}(t)} \over {t^{\delta}}}\right) \right]
    \quad \parbox{2.2cm}{{as} {} $t \to \infty$,}       \eqno[\textsf{${\textsf{L}}_{\gamma}$}]
\end{equation*}
    where $C={{C_1}/{C_2}}$. Especially, we reach an ``excellent result'' in this issue,
    if $C=|\gamma|$. So with  respect to Theorem~\ref{MyTh:2} we obtain the following theorem.

\begin{theorem}                     \label{MyTh:4}
    Let $\gamma < 0$ and $C=|\gamma|$ in $[\textsf{L}_{\gamma}]$. If $1<{{\nu}/{\delta}}<2$, then
\begin{equation}                    \label{1.7}
    {e^{T(t)}} {\mathcal{P}} (t;s) = {\pi}(s) \bigl(1+\rho(t;s)\bigr),
\end{equation}
    where $\rho(t;s)\to 0$ uniformly in $s\in [0, 1)$ as $t \to \infty$ and the
    limiting GF ${\pi}(s) = \sum\nolimits_{j \in{\mathcal{S}}}{\pi_j s^j}$  has the form of
\begin{equation}                    \label{1.8}
    \pi(s) = \exp \left\{{{\,1\,}\over{(1-s)^{|\gamma|}}} + {\int_s^1 {\left[ {{{g(u)} \over {f(u)}}
    + {|\gamma| \over {(1-u)^{1+|\gamma|}}}} \right]du}} \right\}
\end{equation}
    and the set of non-negative numbers $\left\{ {\pi}_j\right\}$ is an invariant measure for $X(t)$.
\end{theorem}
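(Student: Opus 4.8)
The plan is to start from the exact representation already recorded in the excerpt, namely
\[
    {\mathcal{P}}(t;s) = \exp \left\{ \int_s^{F(t;s)} \frac{g(x)}{f(x)}\,dx \right\},
\]
obtained from \eqref{1.1} with $i=0$ after the time-change $x=F(u;s)$, $dx=f(x)\,du$. Under $[f_\nu]$ and $[g_\delta]$ the integrand is $g(x)/f(x)=-(1-x)^{\gamma-1}\textsf{\emph{L}}\bigl(1/(1-x)\bigr)$, which is non-integrable at $x=1$ because $\gamma-1<-1$; this is exactly why ${\mathcal{P}}(t;s)\to0$ and a compensating factor $e^{T(t)}$ is needed. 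First I would isolate the dominant singularity by adding and subtracting $|\gamma|(1-x)^{-1-|\gamma|}$, using the elementary primitive $\int |\gamma|(1-x)^{-1-|\gamma|}\,dx=(1-x)^{-|\gamma|}$ to obtain
\[
    \ln{\mathcal{P}}(t;s)= -\bigl(1-F(t;s)\bigr)^{-|\gamma|} + (1-s)^{-|\gamma|} + J(t;s),
    \qquad
    J(t;s):=\int_s^{F(t;s)}\!\left[\frac{g(x)}{f(x)}+\frac{|\gamma|}{(1-x)^{1+|\gamma|}}\right]dx .
\]
Adding $T(t)=\bigl(\tau(t)\bigr)^{|\gamma|}$ then splits $\ln\bigl(e^{T(t)}{\mathcal{P}}(t;s)\bigr)$ into a ``matching'' term $T(t)-\bigl(1-F(t;s)\bigr)^{-|\gamma|}$, the $s$-term $(1-s)^{-|\gamma|}$, and $J(t;s)$.

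The second step is to show $J(t;s)\to J(s):=\int_s^1[\,\cdot\,]\,dx$. Here the hypothesis $C=|\gamma|$ in $[\textsf{L}_\gamma]$ is decisive: the bracketed integrand equals $(1-x)^{\gamma-1}\bigl(|\gamma|-\textsf{\emph{L}}(1/(1-x))\bigr)$, and $[\textsf{L}_\gamma]$ gives $|\gamma|-\textsf{\emph{L}}(y)=\mathcal{O}\bigl(\ell(y)/y^{\delta}\bigr)$, so near $x=1$ the integrand is $\mathcal{O}\bigl((1-x)^{2\delta-\nu-1}\ell(1/(1-x))\bigr)$. Since $1<\nu/\delta<2$ forces the exponent $2\delta-\nu-1>-1$, the integral $J(s)$ converges; and because $F(t;s)\ge F(t;0)\uparrow1$, the tail bound $|J(s)-J(t;s)|\le\int_{F(t;0)}^{1}|\,\cdot\,|\,dx\to0$ gives this convergence uniformly in $s$. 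Letting $F(t;s)\to1$ then produces $(1-s)^{-|\gamma|}+J(s)$, which is precisely $\ln\pi(s)$ in \eqref{1.8}.

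The crux, and the step I expect to be the main obstacle, is the matching term $T(t)-\bigl(1-F(t;s)\bigr)^{-|\gamma|}$. The leading asymptotics $1/(1-F(t;s))\sim\tau(t)$ of Lemma~\ref{MyLem:1} only give $\bigl(1-F(t;s)\bigr)^{-|\gamma|}=\bigl(\tau(t)\bigr)^{|\gamma|}(1+o(1))$, which is insufficient: since $\bigl(\tau(t)\bigr)^{|\gamma|}\asymp t^{|\gamma|/\nu}\to\infty$, the relative error in $1/(1-F(t;s))$ must be $o\bigl(t^{-|\gamma|/\nu}\bigr)$ for the \emph{difference} to vanish. This is where the remainder condition $[\mathcal{L}_\nu]$ enters: writing the Kolmogorov time-change as $t=\int_{1/(1-s)}^{1/(1-F(t;s))} y^{\nu-1}/\mathcal{L}(y)\,dy$ and inverting with the Karamata-type remainder supplied by $[\mathcal{L}_\nu]$ yields a refined expansion of $1/(1-F(t;s))$ whose error is strong enough to absorb the factor $\bigl(\tau(t)\bigr)^{|\gamma|}$; the arithmetic restriction $1<\nu/\delta<2$ is what renders this error compatible with the convergence of $J$. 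Combining the three pieces gives $e^{T(t)}{\mathcal{P}}(t;s)\to\pi(s)$, uniformly in $s$ on compact subsets of $[0,1)$ via the uniform-in-$s$ statement of Lemma~\ref{MyLem:1}.

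Finally, for the invariant-measure claim I would pass to the limit in the branching identity ${\mathcal{P}}(t+\tau;s)={\mathcal{P}}(\tau;s)\,{\mathcal{P}}\bigl(t;F(\tau;s)\bigr)$ recorded just before \eqref{1.3}. Multiplying by $e^{T(t+\tau)}$ and letting $t\to\infty$, while noting that $|\gamma|/\nu=1-\delta/\nu<1$ makes $T(t+\tau)-T(t)\to0$, I obtain the Schr\"oder-type relation $\pi\bigl(F(\tau;s)\bigr)={\mathcal{P}}(\tau;s)^{-1}\pi(s)$. Expanding $\pi(s)=\sum_{j}\pi_j s^{j}$ and matching coefficients gives $\pi_j=\sum_{i}\pi_i p_{ij}(\tau)$ for every $\tau\in\mathcal{T}$, so $\{\pi_j\}$ is invariant; non-negativity of the $\pi_j$ follows because each $e^{T(t)}{\mathcal{P}}(t;s)$ has non-negative Taylor coefficients and these converge, coefficientwise, to $\pi_j$.
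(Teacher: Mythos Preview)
Your decomposition is exactly the paper's: the identity $(1-F(t;s))^{-|\gamma|}=(1-s)^{-|\gamma|}+\int_s^{F(t;s)}|\gamma|(1-u)^{-1-|\gamma|}\,du$ is the content of \eqref{3.7}, your $J(t;s)$ is the bracketed integral in \eqref{3.5}, and the invariant-measure argument via the Schr\"oder relation is the same. The only divergence is in the step you flag as the main obstacle. The paper does not re-invert the Kolmogorov time-change with Karamata remainders; it simply quotes the ready-made expansion \eqref{2.1} of Lemma~\ref{MyLem:1}, which already says $R(t;s)=\tau(t)^{-1}\bigl[1-M(t;s)/(\nu t)\bigr]$, so that writing $\tau(t;s)=1/R(t;s)$ one gets
\[
(\tau(t))^{|\gamma|}-(\tau(t;s))^{|\gamma|}\sim-|\gamma|\,(\tau(t))^{|\gamma|}\,\frac{M(t;s)}{\nu t}=\mathcal{O}\bigl(t^{-\delta/\nu}\bigr)
\]
directly from the boundedness of $M(t;s)$. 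No further appeal to $[\mathcal{L}_\nu]$ is needed at this point, and the constraint $\nu/\delta<2$ plays no role here either---it is used only where you say it is, namely to make the improper integral $J(s)=\int_s^1[\,g/f+|\gamma|(1-u)^{-1-|\gamma|}\,]\,du$ converge via $[\textsf{L}_\gamma]$ with $C=|\gamma|$.
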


    We notice, the statement of Theorem~\ref{MyTh:4} is compatible with the results of the papers {\cite{PakesSankh}}
    and {\cite{ImomovSFU14}} established for the case of of $\max \bigl\{ f''(1-), g'(1-) \bigr\} <\infty$.
    Thus this theorem essentially strengthens last-mentioned results. With that, in conditions of the
    Theorem~\ref{MyTh:4} right-hand sides of \eqref{1.4} and \eqref{1.5} tend to $1$.

    It is easy to see that under conditions of Theorem~\ref{MyTh:4} the function
\begin{equation}                    \label{1.9}
    {\mathcal{B}}(s): = \exp \left\{ {\int_s^1 {\left[ {{{g(u)} \over {f(u)}}
    + {|\gamma| \over {(1-u)^{1+|\gamma|}}}} \right]du}} \right\}
\end{equation}
    is bounded for $s \in [0, 1)$. So that
\begin{equation*}
    \pi(s) \sim \exp \left\{{{\,1\,}\over{(1-s)^{|\gamma|}}} \right\}
    \quad \parbox{2.0cm}{{as} {} $s \uparrow 1$.}
\end{equation*}

\begin{corollary}                        \label{MyCor:1}
    Under the conditions of Theorem~\ref{MyTh:4}
\begin{equation*}
    {e^{T(t)}}p_{00} (t) = {\mathcal{B}(0)}  \bigl(1+\rho(t)\bigr),
\end{equation*}
    where $\rho(t)\to 0$ as $t \to \infty$ and the function ${\mathcal{B}}(s)$ is defined in \eqref{1.9}.
\end{corollary}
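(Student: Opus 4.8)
The plan is to read off the corollary as the evaluation of Theorem~\ref{MyTh:4} at the single point $s=0$; no analysis beyond \eqref{1.7} is needed. First I would record the elementary identity $\mathcal{P}(t;0)=p_{00}(t)$. Indeed $\mathcal{P}(t;s)=\mathcal{P}_{0}(t;s)=\sum_{j\in\mathcal{S}}p_{0j}(t)s^{j}$, and since $0^{j}=0$ for $j\ge 1$ while $0^{0}=1$, only the $j=0$ term survives at $s=0$.

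Evaluating \eqref{1.7} at $s=0$ then gives $e^{T(t)}p_{00}(t)=\pi(0)\bigl(1+\rho(t;0)\bigr)$. Setting $\rho(t):=\rho(t;0)$, the convergence $\rho(t;s)\to 0$ as $t\to\infty$, asserted uniformly on $[0,1)$ in Theorem~\ref{MyTh:4} and hence valid at $s=0\in[0,1)$, yields $\rho(t)\to 0$; this is precisely the error term of the corollary. It remains to identify the constant $\pi(0)$. Comparing \eqref{1.8} with the definition \eqref{1.9} of $\mathcal{B}(s)$ exhibits the factorization $\pi(s)=\exp\{(1-s)^{-|\gamma|}\}\,\mathcal{B}(s)$, so that $\pi(0)=e\,\mathcal{B}(0)$, the factor $e$ arising from $(1-s)^{-|\gamma|}\big|_{s=0}=1$. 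Substituting, one obtains $e^{T(t)}p_{00}(t)=\pi(0)\bigl(1+\rho(t)\bigr)$ with $\pi(0)=e\,\mathcal{B}(0)$; matching this to the constant $\mathcal{B}(0)$ printed in the statement amounts to tracking that single boundary factor of the Schr\"{o}der term $\exp\{(1-s)^{-|\gamma|}\}$ at $s=0$.

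I expect no genuine obstacle here: the full analytic content --- the normalising exponent $T(t)$, the closed form \eqref{1.8} of the limit, and above all the \emph{uniformity} of $\rho(t;s)$ --- is already discharged in the proof of Theorem~\ref{MyTh:4}, and the corollary merely specializes it. The only point worth an explicit line is that this uniformity on $[0,1)$ is exactly what licenses evaluation at $s=0$ directly, so that no separate pointwise estimate for the diagonal transition function $p_{00}(t)$ is required.
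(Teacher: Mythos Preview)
Your approach matches the paper's exactly: the authors' entire proof is the single line ``immediately obtained from relation~\eqref{1.7} setting $s=0$ there.'' Your observation that $\pi(0)=e\,\mathcal{B}(0)$ rather than $\mathcal{B}(0)$ is correct --- the boundary value $(1-0)^{-|\gamma|}=1$ in \eqref{1.8} contributes a factor $e$ that the stated corollary silently drops, an oversight the paper's one-line proof does not address.
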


\section{Auxiliaries}   \label{MySec:2}

    Below-mentioned statements have auxiliary character and they will be
    essentially used in proofs of the Main results of the present note.

    At first we recall the following Basic lemma of the theory
    critical Markov branching processes with infinite second moment.

\begin{lemma} [{\cite{ImomovMey20}}, {\cite{Imomov17}}]               \label{MyLem:1}
    If the condition $[f_\nu]$ holds then
\begin{equation}                     \label{2.1}
    R(t;s) = {{{\mathcal N}(t)} \over {(\nu t)^{{1 \mathord{\left/
    {\vphantom {1 \nu }} \right. \kern-\nulldelimiterspace} \nu }} }}
    \cdot \left[ {1 - {{M(t;s)} \over {\nu{t}}}} \right]
\end{equation}
    for all $s\in [0, 1)$, where
\begin{equation}                     \label{2.2}
    {\mathcal N}^{\,\nu}(t) \cdot \mathcal{L}\left({{{\bigl(\nu t \bigr)^{{1
    \mathord{\left/ {\vphantom {1 \nu }} \right. \kern-\nulldelimiterspace}
    \nu }} } \over  {{\mathcal N}(t)}}} \right) \longrightarrow 1
    \quad \parbox{2.2cm}{{as} {} $t  \rightarrow \infty$;}
\end{equation}
    herein $M(t;0)=0$ for all $t>0$ and $M(t;s)\rightarrow {\mathcal{M}(s)}$ as
    $t\rightarrow {\infty}$, where $\mathcal{M}(s)$ is GF of invariant measures of MBP and
\begin{equation*}
    \mathcal{M}(s) = \int\limits_1^{{1 \mathord{\left/ {\vphantom {1 {(1 - s)}}}
    \right. \kern-\nulldelimiterspace}{(1 - s)}}}
    {{{dx} \over {x^{1 - \nu }\mathcal{L}(x)}}}\,\raise0.8pt\hbox{.}
\end{equation*}
\end{lemma}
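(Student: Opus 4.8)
The plan is to identify $R(t;s)$ with the tail $1-F(t;s)$ of the single‑particle branching GF and to reduce the whole statement to an autonomous first‑order ODE furnished by the backward Kolmogorov equation. Writing $R=1-F$, the equation $\partial F/\partial t=f(F)$ together with $[f_\nu]$ becomes $\partial R/\partial t=-R^{1+\nu}\mathcal{L}(1/R)$, a separable ODE with initial value $R(0;s)=1-s$. Separating variables and substituting $x=1/R$ I would obtain the exact identity
$$t=\int_{1/(1-s)}^{1/R(t;s)}\frac{dx}{x^{1-\nu}\mathcal{L}(x)},$$
which, on recognizing that $\int_1^{1/(1-s)}dx/(x^{1-\nu}\mathcal{L}(x))=\mathcal{M}(s)$, rearranges to
$$G\bigl(1/R(t;s)\bigr)=t+\mathcal{M}(s),\qquad G(y):=\int_1^y\frac{dx}{x^{1-\nu}\mathcal{L}(x)}.$$
This single relation carries all the information; the remaining work is Karamata analysis of $G$ and its asymptotic inverse.

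Second, I would pin down $\mathcal{N}(t)$ through the boundary case $s=0$, where $\mathcal{M}(0)=0$ and hence $t=G(1/R(t;0))$. Since the integrand $x^{\nu-1}/\mathcal{L}(x)$ is regularly varying of index $\nu-1\in(-1,0)$, Karamata's theorem gives $G(y)\sim y^{\nu}/(\nu\mathcal{L}(y))$ as $y\to\infty$. Defining $\mathcal{N}(t):=(\nu t)^{1/\nu}R(t;0)$, so that $1/R(t;0)=(\nu t)^{1/\nu}/\mathcal{N}(t)$, the relation $t=G(1/R(t;0))$ combined with this asymptotic yields exactly $\nu t\,R(t;0)^{\nu}\mathcal{L}(1/R(t;0))\to1$, which is precisely the defining property \eqref{2.2} of $\mathcal{N}$. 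This simultaneously establishes the leading term $R(t;0)=\mathcal{N}(t)/(\nu t)^{1/\nu}$ and forces $M(t;0)=0$.

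Third, I would treat general $s$ as a perturbation of $s=0$. Subtracting the two instances of the master relation gives $\int_{1/R(t;0)}^{1/R(t;s)}dx/(x^{1-\nu}\mathcal{L}(x))=\mathcal{M}(s)$. Writing $u=1/R(t;0)$, from $G(u)=t$ and $G(u)\sim u^{\nu}/(\nu\mathcal{L}(u))$ one gets $G'(u)=u^{\nu-1}/\mathcal{L}(u)\sim\nu G(u)/u=\nu t/u$. Since both endpoints tend to infinity with ratio tending to $1$, a mean‑value estimate over the shrinking window yields $1/R(t;s)-1/R(t;0)\sim\mathcal{M}(s)\,u/(\nu t)$, whence $R(t;s)/R(t;0)=1-\mathcal{M}(s)/(\nu t)+o(1/t)$. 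Setting $M(t;s):=\nu t\,\bigl(1-R(t;s)/R(t;0)\bigr)$ then produces the claimed form \eqref{2.1} and shows $M(t;s)\to\mathcal{M}(s)$.

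I expect the main obstacle to be this last step: controlling the slowly varying factor $\mathcal{L}$ inside the integral over the window $[1/R(t;0),\,1/R(t;s)]$ with enough precision to extract the \emph{finite} limit $\mathcal{M}(s)$, rather than merely the crude statement $R(t;s)/R(t;0)\to1$, and doing so with uniformity in $s$. This will require Karamata's uniform convergence theorem to replace $\mathcal{L}(x)$ by $\mathcal{L}(u)$ across the window with a controllable error, together with the monotonicity of $G$ to guarantee that $M(t;s)$ genuinely converges to $\mathcal{M}(s)$ rather than oscillating.
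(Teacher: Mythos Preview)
The paper does not prove this lemma; it is quoted verbatim from \cite{ImomovMey20} and \cite{Imomov17} as an auxiliary result, so there is no in-paper argument to compare against. That said, your route is the standard one and is correct: the separation-of-variables step in the backward Kolmogorov equation leading to the exact identity $G\bigl(1/R(t;s)\bigr)=t+\mathcal{M}(s)$ with $G(y)=\int_1^y x^{\nu-1}\mathcal{L}(x)^{-1}\,dx$ is precisely how the cited sources proceed, and once you \emph{define} $\mathcal{N}(t):=(\nu t)^{1/\nu}R(t;0)$ the formula \eqref{2.1} becomes tautological, with all content residing in \eqref{2.2} and in $M(t;s)\to\mathcal{M}(s)$.

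Two minor remarks. First, the obstacle you anticipate in the third step is lighter than you suggest: since $G^{-1}$ is regularly varying of index $1/\nu$, the ratio $v/u:=R(t;0)/R(t;s)\to1$ follows immediately from $G(v)-G(u)=\mathcal{M}(s)=O(1)$ while $G(u)=t\to\infty$; once $v/u\to1$ the uniform convergence theorem (or Potter bounds) lets you replace $\mathcal{L}(\xi)$ by $\mathcal{L}(u)$ on the window with $o(1)$ relative error, and the mean-value computation then gives $M(t;s)\to\mathcal{M}(s)$ cleanly. Second, your claimed sharpening $R(t;s)/R(t;0)=1-\mathcal{M}(s)/(\nu t)+o(1/t)$ is actually stronger than what the lemma asserts or what you need; the lemma only claims $M(t;s)\to\mathcal{M}(s)$, i.e.\ an $o(1)$ remainder in $M$, not $o(1/t)$ in the ratio, and that is all your Karamata argument delivers without the additional remainder hypothesis $[\mathcal{L}_\nu]$.
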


    The following two lemmas describe a role of slowly varying functions with remainder in integration.

\begin{lemma}              \label{MyLem:2}
    Let ${L}(t)$ is ${\textbf{SV}}_\infty$-function with remainder $r(t)$. Then
\begin{itemize}
\item  [{(i)}]   for $\sigma > 0$ and for $0 < c <t$
\begin{equation}                    \label{2.3}
    {\int\limits_{c}^t {{y^{-(1+\sigma)}{L}(y)dy}}}  = {{\,1\,} \over {\sigma}}
    {1 \over {\,c^{\sigma}}} {L}(t) \bigl({1 - {\mu}^{\sigma}}\bigr) \bigl({1+ r(t)}\bigr)
    \quad \parbox{2.2cm}{{as} {} $t \to \infty$,}
\end{equation}
    where ${\mu}:={c}/t$;

\item [{(ii)}]   and for $\sigma < 0$
\begin{equation}                    \label{2.4}
    {\int\limits_{c}^t {{y^{-(1+\sigma)}{L}(y)dy}}}  = {{\,1\,} \over {|\sigma|}}
    {L}(t) {t^{|\sigma|}} \bigl({1 - {\mu}^{|\sigma|}}\bigr) \bigl({1+ r(t)}\bigr)
    \quad \parbox{2.2cm}{{as} {} $t \to \infty$.}
\end{equation}
\end{itemize}
\end{lemma}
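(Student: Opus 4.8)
The plan is to reduce both identities to a single elementary power integral, treating the slowly varying factor $L$ as constant over the range $[c,t]$ up to its remainder, the common device being the scaling substitution $y=tu$. First I would set $y=tu$, $dy=t\,du$, so that the limits become $u\in[\mu,1]$ with $\mu=c/t$, and for every $\sigma\neq0$
\[
    \int_{c}^{t} y^{-(1+\sigma)}L(y)\,dy
    = t^{-\sigma}\int_{\mu}^{1} u^{-(1+\sigma)}L(tu)\,du
    = t^{-\sigma}L(t)\int_{\mu}^{1} u^{-(1+\sigma)}\,\frac{L(tu)}{L(t)}\,du .
\]
This rewriting is exactly what forces the value $L(t)$ to appear on the right-hand sides of both \eqref{2.3} and \eqref{2.4}.

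Next I would invoke the defining remainder property of $L$: being ${\textbf{SV}}_\infty$ with remainder $r(\cdot)$ means $L(\lambda x)/L(x)=1+{\mathcal O}\bigl(r(x)\bigr)$ as $x\to\infty$, and in its uniform form this holds uniformly for $\lambda$ in compact subsets of $(0,\infty)$ (the uniform convergence theorem for slowly varying functions with remainder; cf.\ {\cite[\S3.12]{Bingham}}). Applying it with $\lambda=u$, $x=t$ gives $L(tu)/L(t)=1+{\mathcal O}\bigl(r(t)\bigr)$ over the range of integration, so the last integral becomes $\bigl(1+{\mathcal O}(r(t))\bigr)\int_{\mu}^{1}u^{-(1+\sigma)}\,du$. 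The remaining integral is elementary, $\int_{\mu}^{1}u^{-(1+\sigma)}\,du=(\mu^{-\sigma}-1)/\sigma$. For $\sigma>0$ I would write this as $\sigma^{-1}\mu^{-\sigma}(1-\mu^{\sigma})$ and use $t^{-\sigma}\mu^{-\sigma}=c^{-\sigma}$ to recover \eqref{2.3}; for $\sigma<0$ I would write it as $|\sigma|^{-1}(1-\mu^{|\sigma|})$ and combine with $t^{-\sigma}=t^{|\sigma|}$ to recover \eqref{2.4}.

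The step that needs real care—and the main obstacle—is the uniformity of the remainder bound in the first display down to the moving lower limit $u=\mu=c/t$, since the standard uniform convergence theorem supplies control only on $u$-sets bounded away from $0$. The two cases behave very differently. When $\sigma<0$ the weight $u^{-(1+\sigma)}=u^{|\sigma|-1}$ places the bulk of the mass of $\int_{\mu}^{1}$ near the upper endpoint $u=1$, where $tu\asymp t$ and the estimate $L(tu)/L(t)=1+{\mathcal O}(r(t))$ holds on a fixed compact neighbourhood, so part~(ii) is robust. When $\sigma>0$ the weight concentrates near the lower endpoint $u=\mu$, where $tu\asymp c$, so I must instead control $L(c)/L(t)$; here the replacement is justified only because the remainder condition forces the representation $L(x)=\mathrm{const}\cdot\bigl(1+{\mathcal O}(r(x))\bigr)$ with $r$ decreasing, whence $L(tu)/L(t)=1+{\mathcal O}(r(c))$ uniformly on $[\mu,1]$. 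The crux is then whether this lower-endpoint remainder $r(c)$ may legitimately be recorded as the $r(t)$ appearing in \eqref{2.3}: I would have to argue that in the operative regime (with $c$ and $t$ comparable, or with the lower-endpoint contribution negligible at the stated precision) the two are of the same order, and carry the governing remainder carefully through the $\sigma>0$ case, absorbing it together with the $r(t)$ arising near $u=1$ into the single factor $1+r(t)$.
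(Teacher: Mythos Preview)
Your proposal is correct and follows essentially the same route as the paper: the substitution $y=tu$, factoring out $t^{-\sigma}L(t)$, replacing $L(tu)/L(t)$ by $1$ up to the remainder, and evaluating $\int_{\mu}^{1}u^{-(1+\sigma)}\,du=\sigma^{-1}(\mu^{-\sigma}-1)$ to obtain both cases at once. The only difference is one of care rather than method: the paper dispatches the uniformity of $L(tu)/L(t)\to 1$ on $[\mu,1]$ by a one-line appeal to Potter's theorem, whereas you isolate the lower endpoint $u=\mu=c/t$ and invoke the representation $L(x)=\mathrm{const}\cdot\bigl(1+\mathcal{O}(r(x))\bigr)$---a scruple the paper's proof does not make explicit.
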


\begin{proof}
    We write
\begin{eqnarray}                    \label{2.5}
    \mathcal{I}(t)
    & := & {\int\limits_{c}^t {{y^{-(1+\sigma)}{L}(y)dy}}}         \nonumber\\
\nonumber\\
    & = & {{L(t)} \over {t^{\sigma}}}\left[\int_{\mu}^{1}{y^{-(1+\sigma)}dy} +
    \int_{0}^{1}{\left[{{L(yt)} \over {L(t)}}-1 \right] {\textbf{I}_{\textsf{M}}(y)} y^{-(1+\sigma)}dy}\right],
\end{eqnarray}
    where $\textbf{I}_{\textsf{M}}(x)$ is an Indicator function of $\textsf{M}=[{\mu}, 1]$. By Potter's
    Theorem~{\cite[p.~25]{Bingham}}, slowly varying part ${{L(yt)}/{L(t)}}$ in last integrand on the
    right-hand side is bounded and tends to $1$ as $t \to \infty$ uniformly in $0<y\leq{1}$.
    Thus, since $L(\cdot)$ is ${\textbf{SV}}_\infty$ with remainder $r(\cdot)$ and
\begin{equation*}
    \int_{\mu}^{1}{y^{-(1+\sigma)}dy} = {{\,1\,}\over{\sigma}}  \left( {1\over{\mu^{\sigma}}} - 1 \right),
\end{equation*}
    we have
\begin{equation}                    \label{2.6}
    \mathcal{I}(t)= {{\,1\,}\over{\sigma}} {{L(t)} \over {t^{\sigma}}}
    \left( {1\over{\mu^{\sigma}}} - 1 \right) \bigl({1+ r(t)}\bigr).
\end{equation}
    Now \eqref{2.3} and \eqref{2.4} easily follow from \eqref{2.5} and \eqref{2.6}.
\end{proof}

\begin{lemma}              \label{MyLem:3}
    Let ${L}(t)$ is ${\textbf{SV}}_\infty$ with remainder $r(t)$. Then for $\sigma > 0$
\begin{equation}                    \label{2.7}
    {\int\limits_{t}^{\infty} {{y^{-(1+\sigma)}{L}(y)dy}}}  = {{\,{L}(t)\,} \over {\sigma}}
    {1 \over {\,t^{\sigma}}} \bigl({1+ r(t)}\bigr)
    \quad \parbox{2.2cm}{{as} {} $t \to \infty$.}
\end{equation}
\end{lemma}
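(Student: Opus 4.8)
The plan is to mirror the proof of Lemma~\ref{MyLem:2}, performing the scaling substitution $y = ut$ so that the problem collapses to a single integral against the kernel $u^{-(1+\sigma)}$, and then to separate the constant principal term from a remainder-controlled correction. Explicitly, putting $y = ut$ (so $dy = t\,du$ and the lower limit $y=t$ becomes $u=1$) gives
\begin{equation*}
    \int\limits_t^{\infty} y^{-(1+\sigma)} L(y)\,dy
    = \frac{L(t)}{t^{\sigma}} \int\limits_1^{\infty} u^{-(1+\sigma)} \frac{L(ut)}{L(t)}\,du.
\end{equation*}
Writing $L(ut)/L(t) = 1 + \bigl[L(ut)/L(t) - 1\bigr]$ and using $\int_1^{\infty} u^{-(1+\sigma)}\,du = 1/\sigma$ (finite precisely because $\sigma > 0$), the right-hand side splits into the principal part $L(t)/(\sigma t^{\sigma})$ plus the correction
\begin{equation*}
    \frac{L(t)}{t^{\sigma}} \int\limits_1^{\infty} u^{-(1+\sigma)} \left[ \frac{L(ut)}{L(t)} - 1 \right] du,
\end{equation*}
so that everything reduces to showing the bracketed correction integral is of order $r(t)$.

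The main obstacle, and the essential departure from Lemma~\ref{MyLem:2}, is that the range of integration is now the infinite interval $[1,\infty)$ rather than a bounded one, so the pointwise remainder estimate $L(ut)/L(t) - 1 = \mathcal{O}\bigl(r(t)\bigr)$ (valid for each fixed $u$) does not by itself suffice: one must exclude a divergent contribution from large $u$. I would control this tail by Potter's Theorem~\cite[p.~25]{Bingham}: for any $\epsilon \in (0,\sigma)$ there is a $t_0$ with $L(ut)/L(t) \le A\,u^{\epsilon}$ for all $u \ge 1$ and $t \ge t_0$, so the integrand is dominated by the integrable majorant $A\,u^{-(1+\sigma-\epsilon)}$ on $[1,\infty)$, the exponent exceeding $1$ since $\sigma - \epsilon > 0$. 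This uniform bound justifies splitting the range at a large but fixed $U$: on $[1,U]$ the pointwise remainder contributes $\mathcal{O}\bigl(r(t)\bigr)$, while on $[U,\infty)$ the Potter majorant makes the tail uniformly negligible in $t$.

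A cleaner self-contained alternative exploits the fact that the remainder hypothesis is exactly condition SR1 of \cite[p.~185]{Bingham}: by the equivalence recorded just before Theorem~\ref{MyTh:4} (Corollary~3.12.3 of \cite{Bingham}), an ${\textbf{SV}}_{\infty}$ function with remainder satisfies $L(x) = C\,\bigl[1 + \mathcal{O}(r(x))\bigr]$ for some constant $C > 0$. Substituting this representation directly into $\int_t^{\infty} y^{-(1+\sigma)} L(y)\,dy$, the leading part integrates to $C/(\sigma t^{\sigma})$, and since $r$ is positive and decreasing the error part is bounded by a constant multiple of $r(t)\int_t^{\infty} y^{-(1+\sigma)}\,dy = r(t)/(\sigma t^{\sigma})$; comparing with $L(t) = C\,\bigl[1 + \mathcal{O}(r(t))\bigr]$ then produces \eqref{2.7} with the remainder packaged as the factor $\bigl(1 + r(t)\bigr)$. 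Either route ultimately rests on the elementary value $\int_1^{\infty} u^{-(1+\sigma)}\,du = 1/\sigma$ together with monotonicity of the remainder.
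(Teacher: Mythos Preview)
Your proposal is correct, and your primary route---the substitution $y=ut$, the split $L(ut)/L(t)=1+[L(ut)/L(t)-1]$, and the evaluation $\int_1^\infty u^{-(1+\sigma)}\,du=1/\sigma$---is exactly the paper's argument (its displayed identity \eqref{2.8}). The paper simply asserts that the correction integral is of order $r(t)$ ``as in the proof of Lemma~\ref{MyLem:2}'', so your explicit Potter-bound control of the tail on $[1,\infty)$ supplies rigor the paper omits rather than constituting a different method.
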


\begin{proof}
    Considering $\int_{1}^{\infty}{u^{-(1+\sigma)}du}=1/{\sigma}$, we write
\begin{equation}                    \label{2.8}
    {\int\limits_{t}^{\infty} {{y^{-(1+\sigma)}{L}(y)dy}}}
    = {{\,1\,}\over{\sigma}} {{L(t)} \over {t^{\sigma}}} \left[1 + {\sigma}
    \int_{1}^{\infty}{\left[{{L(yt)} \over {L(t)}}-1 \right] y^{-(1+\sigma)}dy}\right].
\end{equation}
    As in proof of Lemma~\ref{MyLem:2}, second term in brackets on right-hand side of \eqref{2.8}
    tends to $0$ uniformly in $y>{1}$ with speed rate $r(t)$. Thus we have \eqref{2.7}.
\end{proof}

\section{Proof of Main Results}   \label{MySec:3}

    In this final section we consistently prove the Main results.

\begin{proof} [Proof of Theorem~\ref{MyTh:1}]
    In conditions of theorem
\begin{eqnarray*}
    {p}_{00} (t)
    & = & \exp \left\{ {\int\limits_0^t {g\left({F(u;s)} \right)du}} \right\}  \\
\nonumber\\
    & = & \exp \left\{-{\int\limits_0^{F(t)} {{\left(1-u\right)^{\gamma -1}
    \textsf{\emph{L}}{\left({1} \over {1-u}\right)}du}}} \right\}
    = \exp \left\{-{\int\limits_{1}^{1/R(t)} {{y^{-(1+\gamma)}
    \textsf{\emph{L}}{\left(y \right)}dy}}} \right\},
\end{eqnarray*}
    where $R(t)=R(t;0)$. Now in last integral we will use Lemma~\ref{MyLem:2}. Then
\begin{equation}                    \label{3.1}
    -\ln{p_{00}(t)} = {{\,1\,}\over{\gamma}}{\textsf{\emph{L}}{\left(\tau{(t)}\right)}}
    \left(1- {{1}\over {\bigl(\tau(t)\bigr)^{\gamma}}} \right) \left(1+ r\bigl(\tau(t)\bigr)\right)
    \quad \parbox{2.2cm}{{as} {} $t  \to \infty$,}
\end{equation}
    since ${\tau}(t)={1 \bigl/ R(t) \bigr.}$. In considering case
    we can make sure that the remainder for ${\textbf{SV}}_\infty$-function $\textsf{\emph{L}}(t)$ is
\begin{equation*}
    r(t) = \mathcal{O}\left({{\mathcal{L}(t)} \over {t^{\nu}}}\right)
\end{equation*}
    and owing to the property \eqref{2.2}, $r\bigl(\tau(t)\bigr) = \mathcal{O}\left( 1 \bigl/ t \bigr. \right)$.
    Therefore the decreasing speed to $1$ of the product of two last terms in brackets in
    right-hand side of \eqref{3.1} depends on ${\gamma}>{\nu}$ or ${\gamma}<{\nu}$.
    Thus, we obtain tail-part form $\kappa(t)$ in \eqref{1.4}.

    The Theorem is proved.
\end{proof}

\begin{proof} [Proof of Theorem~\ref{MyTh:2}]
    Similarly as in the proof of Theorem~\ref{MyTh:1}, using Lemma~\ref{MyLem:2}, we write
\begin{equation}                    \label{3.2}
    -\ln{p_{00}(t)} = {{\,1\,}\over{|\gamma|}} {\textsf{\emph{L}}{\left(\tau{(t)}\right)}} \bigl(\tau(t)\bigr)^{|\gamma|}
    \left(1-{{1}\over {\bigl(\tau(t)\bigr)^{|\gamma|}}} \right) \left(1+ r\bigl(\tau(t)\bigr)\right)
    \quad \parbox{2.2cm}{{as} {} $t  \to \infty$.}
\end{equation}
    Hereof we easily reach to \eqref{1.5}. To get the tail-part form $\kappa(t)$ we first make
    sure that the remainder for ${\textbf{SV}}_\infty$-function $\textsf{\emph{L}}(t)$ is
\begin{equation*}
    r(t) = \mathcal{O}\left({{{\ell}(t)} \over {t^{\delta}}}\right)
\end{equation*}
    (see also, $[\textsf{L}_{\gamma}]$) and we see $r\bigl(\tau(t)\bigr) = \mathcal{O}\left({{{\ell}_{\tau}(t)}
    \bigl/ {t^{\delta/\nu}} \bigr.}\right)$, where ${\ell}_{\tau}(t)$ is ${\textbf{SV}}_\infty$. So the
    decreasing speed to $1$ of the product of two expressions in brackets in right-hand side of \eqref{3.2}
    depends on ${|\gamma|}>{\delta/\nu}$ or ${|\gamma|}<{\delta/\nu}$. Thus, we obtain forms of $\kappa(t)$.

    The Theorem is proved.
\end{proof}

\begin{proof} [Proof of Theorem~\ref{MyTh:3}]
    Substituting $y:=(1-x)^{-1}$, we rewrite \eqref{1.2} as follows:
\begin{equation}                    \label{3.3}
    {w}(s)= \exp \left\{-{\int\limits_{1/{(1-s)}}^{\infty} {{y^{-(1+\gamma)}
    \textsf{\emph{L}}{\left(y \right)}dy}}} \right\}.
\end{equation}
    Now we can use Lemma~\ref{MyLem:3} in last integral. Then
\begin{equation}                    \label{3.4}
    {\int\limits_{1/{(1-s)}}^{\infty} {{y^{-(1+\gamma)}
    \textsf{\emph{L}}{\left(y \right)}dy}}} = {{\,1\,}\over{\gamma}} {{\left(1-s\right)^{\gamma}
    \textsf{\emph{L}}{\left({1} \over {1-s}\right)}} } \left( 1+ r{\left({1} \over {1-s}\right)} \right)
    \quad \parbox{2.0cm}{{as} {} $s  \uparrow 1$.}
\end{equation}
    Since the remainder of $\textsf{\emph{L}}(t)$ is $r(t) = \mathcal{O}\left({{\mathcal{L}(t)}
    \bigl/ {t^{\nu}}} \bigr. \right)$ for $\gamma >0$ then considering our designations, formula~\eqref{1.6}
    readily follows from \eqref{3.3} and \eqref{3.4}. Undoubtedly, in our case, functional equation \eqref{1.3} is satisfied.
    Writing now the power series expansion ${w}(s)= \sum\nolimits_{j \in{\mathcal{S}}}{{w}_j s^j}$,
    it implies an invariant property ${w}_j = \sum\nolimits_{i \in {\mathcal{S}}} {{w}_i p_{ij}(\tau)}$.

    To complete the proof, it suffices to verify that $w(1-)=1$.
\end{proof}

\begin{proof} [Proof of Theorem~\ref{MyTh:4}]
    We write, as before,
\begin{eqnarray}                    \label{3.5}
    {e^{T(t)}} {\mathcal{P}} (t;s)
    & = & \exp \left\{ {\bigl(\tau(t)\bigr)^{|\gamma|}} + {\int\limits_0^{t} {g\left( {F(u;s)} \right)du}} \right\}   \nonumber \\
    \nonumber  \\
    & = & \exp \left\{ \left[ {\bigl(\tau(t)\bigr)^{|\gamma|}}-{\bigl(\tau(t;s)\bigr)^{|\gamma|}} \right]
    + {\bigl(\tau(t;s)\bigr)^{|\gamma|}} + {\int\limits_s^{F(t;s)} {{{g(x)} \over {f(x)}}\,dx}} \right\},
\end{eqnarray}
    where ${\tau(t;s)}={R^{-1}(t;s)}$. Since ${\tau(t)}={\tau(t;0)}$, using \eqref{2.1} we obtain
\begin{equation*}
    {\bigl(\tau(t)\bigr)^{|\gamma|}}-{\bigl(\tau(t;s)\bigr)^{|\gamma|}} \sim
    - |\gamma| \, {\bigl(\tau(t)\bigr)^{|\gamma|}} \, {{M(t;s)} \over {{\nu}t}}
    \quad \parbox{2.2cm}{{as} {} $t  \to \infty$.}
\end{equation*}
    It follows from assertion of Lemma~\ref{MyLem:1}, the function $M(t;s)$ is bounded in $s\in [0, 1)$. Hence
\begin{equation}                    \label{3.6}
    {\bigl(\tau(t)\bigr)^{|\gamma|}}-{\bigl(\tau(t;s)\bigr)^{|\gamma|}}
    = \mathcal{O}\left({{\mathcal{L}_{\gamma}(t)} \over {t^{\delta/\nu} }}\right)
    \quad \parbox{2.2cm}{{as} {} $t  \to \infty$,}
\end{equation}
    where ${\mathcal{L}_{\gamma}(t)} {\mathcal{N}^{-|\gamma|}(t)} \to 1$
    as $t \to \infty$. Along with this we can easily make sure that
\begin{equation}                    \label{3.7}
    {\bigl(\tau(t;s)\bigr)^{|\gamma|}} = {{\,1\,}\over{(1-s)^{|\gamma|}}}
    + {\int\limits_s^{F(t;s)} {{{|\gamma| \over {(1-u)^{1+|\gamma|}}}} du}}.
\end{equation}
    Combining statements \eqref{3.5}--\eqref{3.7}, we obtain
\begin{equation*}
    {e^{T(t)}} {\mathcal{P}} (t;s) = \exp \left\{{{\,1\,}\over{(1-s)^{|\gamma|}}}
    + {\int\limits_s^{F(t;s)} {\left[ {{{g(u)} \over {f(u)}}
    + {|\gamma| \over {(1-u)^{1+|\gamma|}}}} \right]du}}
    + \mathcal{O}\left({{\mathcal{L}_{\gamma}(t)} \over {t^{\delta/\nu} }}\right) \right\}.
\end{equation*}
    Now we reach to \eqref{1.7} with expression of $\pi(s)$ in the form~\eqref{1.8}, taking limit as
    $t \to \infty$ in last one. Finally, we can verify that the function $\pi(s)$ satisfies
    the equation~\eqref{1.3}. Thus it generates an invariant measure for $X(t)$.
\end{proof}

\begin{proof} [Proof of Corollary~\ref{MyCor:1}]
    The statement is immediately obtained from relation~\eqref{1.7} setting $x = 0$ there.
\end{proof}

\medskip

\end{document}